\documentclass{article}
\usepackage{amsmath}
\usepackage{amsthm}
\usepackage{amssymb}


\theoremstyle{plain}
\newtheorem{thm}{Theorem}

\newtheorem{ques}[thm]{Question}
\newtheorem{lem}[thm]{Lemma}

\theoremstyle{definition}
\newtheorem{defi}[thm]{Definition}

\theoremstyle{remark}

\newcounter{enuroman}
\renewcommand{\theenuroman}{\roman{enuroman}}
\newenvironment{romanenumerate}{\begin{list}{\rm (\theenuroman)}{\usecounter{enuroman}
    \setlength{\labelwidth}{1cm}}}
   {\end{list}}

\newcounter{enuRoman}
\renewcommand{\theenuRoman}{(\Roman{enuRoman})}

\newcounter{enuAlph}
\renewcommand{\theenuAlph}{\Alph{enuAlph}}

\newcounter{enualph}
\renewcommand{\theenualph}{\alph{enualph}}

\newcounter{enuarabic}
\renewcommand{\theenuarabic}{\arabic{enuarabic}}

\newcommand{\re}{{\upharpoonright}}

\newcommand{\A}{{\cal A}}
\newcommand{\B}{{\cal B}}

\newcommand{\F}{{\cal F}}

\newcommand{\M}{{\cal M}}

\renewcommand{\P}{{\cal P}}

\newcommand{\SSS}{{\cal S}}
\newcommand{\T}{{\cal T}}

\newcommand{\MM}{{\mathbb M}}

\newcommand{\PP}{{\mathbb P}}
\newcommand{\QQ}{{\mathbb Q}}

\newcommand{\cc}{{\mathfrak c}}

\newcommand{\rr}{{\mathfrak r}}

\newcommand{\tr}{{\mathfrak{tr}}}

\newcommand{\cov}{{\mathsf{cov}}}

\newcommand{\Spec}{{\mathsf{Spec}}}

\newcommand{\pred}{{\mathrm{pred}}}

\newcommand{\ran}{{\mathrm{ran}}}

\newcommand{\fin}{{\mathrm{fin}}}
\newcommand{\height}{{\mathrm{ht}}}

\newcommand{\cf}{{\mathrm{cf}}}

\newcommand{\Lev}{{\mathrm{Lev}}}
\newcommand{\tree}{{\mathrm{tree}}}

\newcommand{\sub}{\subseteq}
\newcommand{\sem}{\setminus}
\newcommand{\twoom}{2^\omega}
\newcommand{\twolom}{2^{<\omega}}

\newcommand{\omloms}{[\omega]^{<\omega}}

\newcommand{\omoms}{[\omega]^\omega}

\newcommand{\ha}{\,{}\hat{}\,}
\newcommand{\la}{\langle}
\newcommand{\ra}{\rangle}


\title{Maximal trees}

\author{J\"org Brendle\thanks{Partially supported by Grant-in-Aid for Scientific Research
   (C) 15K04977, Japan Society for the Promotion of Science, and by Michael Hru\v s\'ak's grants, CONACyT grant no. 177758 and 
   PAPIIT grant IN-108014. \newline
    \indent {\it 2010 Mathematics Subject Classification.} Primary 03E35; Secondary 03E17, 03E05}  \\
   Graduate School of System Informatics \\
   Kobe University \\
   Rokko-dai 1-1, Nada-ku \\
   Kobe 657-8501, Japan \\
   email: {\sf brendle@kobe-u.ac.jp} }

\begin{document}
\maketitle

\begin{abstract}
\noindent We show that, consistently, there can be maximal subtrees of $\P (\omega)$ and $\P (\omega) / \fin$
of arbitrary regular uncountable size below the size of the continuum $\cc$. We also show that there are
no maximal subtrees of $\P (\omega) / \fin$ with countable levels. Our results answer several questions of
Campero, Cancino, Hru\v s\'ak, and Miranda~\cite{CCHM16}.
\end{abstract}



\section{Introduction}

A partial order $(\T , \leq)$ is called a {\em tree} if it has a largest element ${\mathbf{1}}$ and for every $t \in \T$, the
set of {\em predecessors} of $t$ in $\T$, $\pred_\T (t) = \{ s \in \T : s \geq t \}$ is well-ordered by the reverse order of $\leq$.
For each ordinal $\alpha$, the {\em $\alpha$-th level} of $\T$ is given by $\Lev_\alpha (\T) = \{ t \in \T : \pred_\T (t)$
has order type $\alpha\}$. The {\em height} of $\T$, $\height (\T)$, is the least ordinal $\alpha$ such that $\Lev_\alpha (\T)$ is empty.
The {\em width} of $\T$ is the cardinal $\sup \{ | \Lev_\alpha (\T) | : \alpha < \height (\T) \}$. Instead of saying $\T$ has width (at most)
$\kappa$ we may sometimes just say $\T$ has levels of size $\leq\kappa$. Let $(\PP , \leq)$ be a partial order with
largest element ${\mathbf{1}}$. $\T \sub \PP$ is a {\em subtree} of $(\PP , \leq)$ (or, a {\em tree} in $\PP$) if ${\mathbf{1}}
\in \T$ and $(\T , \leq \re (\T \times \T))$ is a tree in the above sense. Note that incomparable (equivalently, incompatible) elements of
$\T$ are not necessarily incompatible in $\PP$; that is, for $s,t \in \T$ with $s \not\leq t$ and $t\not\leq s$ there may exist
$r \in \PP$ with $r \leq s,t$ (of course, such $r$ cannot belong to $\T$).

Trees are ordered by {\em end-extension}, that is, $\SSS \leq \T$ if $\SSS \sub \T$ and $\pred_\T (s) = \pred_\SSS (s)$ for 
every $s \in \SSS$. By Zorn's Lemma, {\em maximal trees}, that is, trees without proper end-extensions, exist in a given
partial order. It is easy to see that $\T \sub \PP$ is maximal iff for every $p \in \PP$
\begin{itemize}
\item either there is $q \in \T$ with $q \leq p$,
\item or there are incomparable elements $q,r \in \T$ with $p \leq q,r$.
\end{itemize}
See~\cite[Proposition 17.11]{Mo14}.

We will consider maximal trees for the case when $\PP$ is either $\P (\omega) \sem \{ \emptyset \}$, ordered by
inclusion, or $\P (\omega) \sem \{ \emptyset \} / \fin$, ordered by inclusion mod finite. For the latter recall that
for $A, B \in \omoms$, if $[A]$ and $[B]$ denote their equivalence classes in $\P (\omega) / \fin$,
then $[A] \leq [B]$ iff $A \sub^* B$ iff $ A \sem B$ is finite. We shall never work with equivalence classes
and rather consider $( \omoms , \sub^*)$ instead of  $( \P (\omega) \sem \{ \emptyset \} / \fin , \leq )$.
Monk~\cite[Proposition 17.9]{Mo14} observed that there are always maximal trees in $\P (\omega)$ of size
$\omega$ and $\cc$, and in $\P(\omega) / \fin$, of size $\cc$, and asked whether there can consistently be maximal
trees of other sizes~\cite[Problems 156 and 157]{Mo14}. These questions were solved by Campero, Cancino, Hru\v s\'ak,
and Miranda who proved that it is consistent that the continuum hypothesis CH fails and there is a maximal tree of height
and width $\omega_1$ in $\P(\omega) / \fin$~\cite[Theorem 3.2]{CCHM16} and a tree of height $\omega$
and width $\omega_1$ which is maximal as a subtree of both $\P (\omega)$ and $\P (\omega) / \fin$~\cite[Theorem 4.1]{CCHM16}.
More explicitly, the existence of such trees follows from one of the parametrized diamond principles of~\cite{MHD04},
and it is well-known that this principle is consistent with $\neg$ CH. Define the {\em tree number} $\tr$ as the least size
of a maximal tree in $\P (\omega) / \fin$ and recall that the {\em reaping number} $\rr$ (see~\cite[Definition 3.6]{Bl10}) is the
least size of a family $\A \sub \omoms$ such that for all $B \in \omoms$ there is $A \in \A$ such that either $A \cap B$
is finite or $A \sub^* B$. It is easy to see that $\omega_1 \leq \rr \leq \tr \leq \cc$~\cite[p. 81]{CCHM16}, and by the mentioned
results both $ \omega_1 = \tr < \cc$ and $\omega_1 < \tr = \cc$ are consistent. This left open the question of whether
$\tr$ can consistently be strictly in between $\omega_1$ and $\cc$~\cite[Question 5.3]{CCHM16}.

We answer this question in the affirmative by proving that for arbitrary regular uncountable $\kappa$, maximal trees in $\P (\omega) / \fin$ 
of height and width $\kappa$ can be added generically to a model with large continuum (Theorem~\ref{tr-pomegamodfin} in
Section 3 below). Furthermore, we show that, consistently, we may simultaneously adjoin maximal trees of different
sizes (Theorem~\ref{tr-spectrum}), thus making the {\em tree spectrum} $\Spec_\tree = \{ \kappa :$ there is
a maximal tree in $\P (\omega) / \fin$ of size $\kappa \}$ large and answering~\cite[Question 5.4]{CCHM16}. 
By modifying the construction, we also obtain consistently trees of width $\kappa$ and height $\omega$ which are
maximal in both $\P (\omega) / \fin$ and $\P (\omega)$, for arbitrary regular uncountable $\kappa$ (Theorem~\ref{tr-pomega}
in Section 4). Again, this construction can be extended to get large spectrum.

In all such constructions of maximal trees in $\P (\omega) / \fin$, the width is at least the cofinality of the height, and we do not know whether
there can consistently be a maximal tree of regular height whose width is smaller than its height (Question~\ref{width-height}). 
However, we prove in ZFC that there are no maximal trees in $\P (\omega) / \fin$ of countable width, thus
answering~\cite[Question 5.2]{CCHM16} (see Theorem~\ref{countable} in Section 2).

\bigskip

\noindent {\bf Acknowledgment.} This research was carried out while the author was visiting UNAM in Ciudad de M\'exico
in spring 2015. He would like to thank Michael Hru\v s\'ak for asking the questions leading to this paper, for many
stimulating discussions, and for financial support. He is also grateful to UNAM for their hospitality.



\section{Trees with countable levels}

A set $A \in \omoms$ is a {\em branching node} in a tree $\T$ if there are incomparable $B, C \in \T$ such that
$\pred_\T (B) = \pred_\T (C)$ and $A$ is the $\sub^*$-smallest node of $\pred_\T (B)$. $b \sub \T$ is a {\em maximal 
branch} if $b$ is a maximal linearly ordered subset of $\T$.

\begin{lem}
Assume $\T$ is a tree with countable levels and $b = \{ A_\alpha : \alpha < \gamma \}$ is a maximal branch in $\T$ with $\cf(\gamma) > \omega$
such that only countably many nodes $A_\alpha$ of $b$ are branching nodes. Then $\T$ cannot be maximal.
\end{lem}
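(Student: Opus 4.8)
The plan is to show directly that $\T$ fails the maximality criterion recalled in the introduction: I will produce a single infinite $p \sub \omega$ such that (i) no $q \in \T$ satisfies $q \sub^* p$, and (ii$'$) the set $U_p = \{ q \in \T : p \sub^* q \}$ is linearly ordered by the tree order (so there are no tree-incomparable $q,r \in \T$ with $p \sub^* q,r$). Since the displayed criterion says $\T$ is maximal iff every $p$ meets one of the two bullets, such a $p$ witnesses that $\T$ is not maximal.

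First I would pass to a tail of the branch. As $\cf (\gamma) > \omega$ and only countably many $A_\alpha$ are branching nodes, there is $\alpha^* < \gamma$ with no branching node at level $\ge \alpha^*$; thus for $\alpha \ge \alpha^*$ the node $A_\alpha$ has a unique immediate successor, namely $A_{\alpha+1}$, and the only way the branch can acquire siblings above $\alpha^*$ is at limit levels. The plan for $p$ is to make it a pseudointersection of $b$, i.e.\ $p \sub^* A_\alpha$ for all $\alpha < \gamma$. This immediately gives (i): if $q \in \T$ and $q \sub^* p$, then $q \sub^* A_\alpha$ for every $\alpha$, so $b \cup \{q\}$ would be a chain properly end-extending the \emph{maximal} branch $b$, which is impossible. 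It remains to secure (ii$'$), namely that no off-branch node lies $\sub^*$-above $p$; because $b \sub U_p$ and any $q \in \T$ off $b$ is tree-incomparable with a cofinal part of $b$, a single off-branch member of $U_p$ would already destroy linearity.

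Every $q \in \T \sem b$ first leaves $b$ at a uniquely determined node $D_q$, and since $q \sub^* D_q$ it suffices to ensure $p \not\sub^* D$ for every such first-divergence node $D$. These split into two kinds: those whose immediate predecessor lies on $b$, which come from the countably many branching nodes and, each level being countable, form a countable set $\{ C_k : k < \omega \}$; and those arising at limit levels $\lambda$, where some $D \ne A_\lambda$ has $\pred_\T (D) = \{ A_\beta : \beta < \lambda \}$ and hence is \emph{not} recorded by the branching-node count. I would build $p$ by recursion, keeping $p \sub^* A_\alpha$ while diagonalizing so that $p \sem C_k$ is infinite for each $k$; because the levels are countable, at each limit only countably many rival nodes $D$ compete, and I steer $p$ to meet $A_\lambda \sem D$ infinitely often for each of them. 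The main obstacle is exactly this limit case: when some rival $D$ satisfies $A_\lambda \sub^* D$, no pseudointersection of $b$ can escape $D$, and the construction must instead route $p$ almost disjointly from $A_\lambda$ at that level, trading the pseudointersection property below $\lambda$ for an almost-disjointness that still yields (i) via the maximal-branch argument while keeping $U_p$ a chain. Controlling these limit splits with the countable-levels hypothesis, and checking that the two regimes can be fused into a single $p$, is the delicate heart of the proof.
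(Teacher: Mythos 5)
Your plan has a genuine gap at its very first step: you propose to ``build $p$ by recursion, keeping $p\sub^* A_\alpha$'' along the whole branch, i.e.\ to construct a pseudointersection of $b$ directly. Since $\cf(\gamma)>\omega$, this is a diagonalization of uncountable length, and in general a $\sub^*$-decreasing chain of uncountable cofinality has \emph{no} pseudointersection at all (a tower is exactly such a chain); nothing in the hypotheses lets you run this recursion. The paper's proof turns on precisely this point, and in the opposite direction: it first \emph{uses} the assumed maximality of $\T$ to manufacture a pseudointersection. Namely, for each $\alpha>\alpha_0$ the set $C_\alpha=A_{\alpha_0}\sem A_\alpha$ must, by maximality, be almost contained in some member of the countable set $\B$ of off-branch immediate successors; since $\cf(\gamma)>\omega$, a pigeonhole argument yields a single $B\in\B$ with $C_\alpha\sub^* B$ for all $\alpha$, and then $A_{\alpha_0}\sem B$ is automatically a pseudointersection of $b$. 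Only after that does a (countable) diagonal argument produce the witness $C\sub A_{\alpha_0}$ with $A_{\alpha_0}\sem B\not\sub^* C$ and $C\not\sub^* B'$ for all $B'\in\B$. Note also that the paper's witness is deliberately \emph{not} a pseudointersection of $b$ (the clause $A_{\alpha_0}\sem B\not\sub^* C$ is what guarantees no $A_\alpha$ sits below $C$), whereas your design forces $p$ to lie above all of $b$, which is exactly the unobtainable part.

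Your second difficulty, the ``limit splits,'' is explicitly left unresolved --- you call it the delicate heart of the proof and do not supply it --- so the proposal is incomplete on its own terms. Two remarks here. First, the specific scenario you fear, a rival $D$ at limit level $\lambda$ with $A_\lambda\sub^* D$, cannot occur: the tree order on $\T$ is the restriction of $\sub^*$, so two distinct nodes on the same level are $\sub^*$-incomparable, and a node $\sub^*$-above $A_\lambda$ would be one of its predecessors. Second, the real problem is that across the uncountably many limit levels there may be uncountably many first-divergence nodes, so they cannot all be handled by a countable diagonalization; this is not a detail to be ``fused in'' but the same uncountable-length obstruction as in the first gap. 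In short, the decisive idea of the paper --- extract the pseudointersection and the single absorbing $B\in\B$ from maximality before doing any construction --- is missing, and without it neither half of your witness $p$ can be built.
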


\begin{proof}
Assume $\T$ is maximal.
By assumption, for some $\alpha_0 < \gamma$, no branching occurs in $b$ after $A_{\alpha_0}$. Also, by assumption, the
set $\B$ of all $B \in \T$ such that $B$ is an immediate successor of some $A_\alpha$ but $B \notin b$ must be countable.
For each $\alpha > \alpha_0$ consider the set $C_\alpha := A_{\alpha_0} \sem A_\alpha$. By maximality, there must be a
set $B_\alpha \in \B$ such that $C_\alpha \sub^* B_\alpha$ (otherwise we could add $C_\alpha$ to $\T$). By countability
of $\B$ and by $cf (\gamma) > \omega$, we see that there is a single $B \in \B$ such that for all $\alpha$,
$C_\alpha \sub^* B$. On the other hand, $A_{\alpha_0} \not\sub^* B$. In particular $A_{\alpha_0} \sem B$ is
a pseudointersection of the $A_\alpha$ (which cannot be added to the tree). Using a standard diagonal argument, we can
construct a set $C$ such that
\begin{itemize}
\item $C \sub A_{\alpha_0}$,
\item $A_{\alpha_0} \sem B \not\sub^* C$, and
\item $C \not\sub^* B'$ for all $B' \in \B$.
\end{itemize}
Now, it is easy to see that $C$ can be added to $\T$: by the third clause, the only predecessors of $C$ in $\T$ are
in $b$. By the second clause, no $A_\alpha$ is almost contained in $C$. Thus we obtain a contradiction.
\end{proof}

\begin{thm}   \label{countable}
There are no maximal trees with countable levels in $\P (\omega) / \fin$.
\end{thm}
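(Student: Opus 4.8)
The plan is to argue by contradiction and reduce everything to the preceding lemma. Assume $\T \sub (\omoms , \sub^*)$ is a maximal tree with countable levels. It then suffices to produce a \emph{maximal branch} $b = \{ A_\alpha : \alpha < \gamma \}$ in $\T$ with $\cf(\gamma) > \omega$ on which only countably many branching nodes lie: the lemma applied to this branch immediately contradicts maximality of $\T$. Thus the entire content of the proof is the construction of such a branch inside an arbitrary maximal tree of countable width.

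First I would secure a branch whose length has uncountable cofinality. I build a branch by recursion, at successor steps following an immediate successor of the current node and at countable limit stages continuing through a node of $\T$ lying below the whole chain built so far. The key claim is that this recursion cannot terminate at a countable stage: if some maximal branch $b$ had length $\gamma$ with $\cf(\gamma) \le \omega$ (either a leaf at a successor stage, or a limit of countable cofinality), then fixing a countable cofinal subchain and taking a pseudointersection $C$, I would feed $C$ into the maximality dichotomy for $\T$. Using the distinction between the tree order and $\sub^*$ (incomparable tree nodes may be $\sub^*$-compatible in $\P(\omega)$) together with a diagonalization in the exact style of the lemma, I would manufacture a set that can be end-extended below $b$, contradicting maximality. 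Hence the branch climbs until its length has uncountable cofinality; in particular $\height(\T)$ has uncountable cofinality and $\T$ is not Aronszajn-like.

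The hard part is keeping the number of branching nodes on the branch countable, and this is where maximality must be used essentially. Countable width alone is useless here: there are abstract \emph{comb} trees of countable width carrying a cofinal branch through uncountably many branching nodes (each branching emitting a short off-branch successor), and even fractal variants in which \emph{every} cofinal branch branches uncountably often, so the lemma cannot be invoked for any branch. The plan is therefore to interleave the branch construction with a pruning step that rules such combs out as \emph{maximal} subtrees of $\P(\omega)/\fin$. Concretely, whenever the current node $A_\alpha$ is a branching node, its off-branch immediate successors root pairwise-disjoint subtrees; I would apply the maximality dichotomy to the differences $A_\alpha \sem A_{\alpha+1}$ (the analogue of the sets $C_\alpha := A_{\alpha_0}\sem A_\alpha$ from the lemma) to show that unless only countably many off-branch successors ever become relevant along the branch, one produces a set addable to $\T$. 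Running this pruning along the recursion should deliver a maximal branch of uncountable cofinality that branches only countably often, after which the lemma applies.

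The main obstacle is precisely this branching bound. Because the tree order and $\sub^*$ diverge, there is no König-type lemma available, and the comb examples show that width gives no leverage, so the whole weight of the argument rests on converting the statement ``\emph{every} long branch branches uncountably often'' into an explicit infinite set whose addition maximality forbids — a diagonalization of the same flavor as, but considerably more global than, the one establishing the preceding lemma.
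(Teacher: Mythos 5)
Your reduction of the theorem to the preceding lemma is exactly where the argument breaks down. You propose to find, inside an arbitrary maximal tree $\T$ of countable width, a maximal branch of uncountable cofinality carrying only countably many branching nodes, and you correctly identify that bounding the branching is the crux --- but you then offer only a sketch (``running this pruning \dots should deliver'') with no mechanism that actually works. Applying the maximality dichotomy to the differences $A_\alpha \sem A_{\alpha+1}$ gives, for each $\alpha$, either a tree node $\sub^*$-below that difference or two incomparable tree nodes $\sub^*$-above it; neither alternative visibly limits how often the branch branches, and nothing in your outline converts ``every long branch branches uncountably often'' into an addable set. The paper does \emph{not} establish the existence of such a branch; in the problematic case it derives a contradiction by an entirely different mechanism, which your plan has no counterpart to.

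The two ideas you are missing are these. First, take a maximal branch $b = \{A_\alpha : \alpha < \gamma\}$ of \emph{minimal} length among all maximal branches. Minimality forces every branch branching off $b$ to have length at least $\gamma$; combined with countable width, this caps the number of branching nodes at $\aleph_0$ whenever $\cf(\gamma)\neq\omega_1$ (uncountably many branching nodes would then sit below a single level $\delta<\gamma$, and their off-branches, all reaching level $\delta$ by minimality, would make that level uncountable). In that case the lemma, or a direct diagonalization against the countable set of off-branch successors, finishes the proof. Second, in the remaining case $\cf(\gamma)=\omega_1$, the branching nodes may genuinely be cofinal in $b$ and the lemma is inapplicable to \emph{any} branch; instead one uses the abundance of branching to build a binary splitting scheme $\{A_s : s\in\twolom\}$ of pairwise incomparable nodes all lying below a fixed level $\alpha_\omega<\gamma$, and minimality then forces each of the $\cc$ many branches of the scheme to reach level $\alpha_\omega$, so that single level has size $\cc$, contradicting countable width outright. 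Your insistence on eventually invoking the lemma for some branch cannot substitute for this Cantor-scheme argument. A smaller gap: at the uncountable stages of countable cofinality in your recursion, the diagonalization needs the set of off-branch immediate successors accumulated so far to be countable, which again comes from minimality plus countable width, not from the recursion itself.
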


\begin{proof}
Assume $\T$ were such a tree. Let $b= \{ A_\alpha : \alpha < \gamma \}$ be a maximal branch such that the length $\gamma$ of $b$ is minimal.
If $cf (\gamma) \neq \omega_1$, then, because of minimality and the countable levels, there can only be countably many
branching nodes in $b$. In particular, the set $\B$ of all $B \in \T$ such that $B$ is an immediate successor of some 
element of $b$ yet $B \notin b$ must be countable. If $\gamma = \delta + 1$ is a successor, a standard diagonal argument 
yields a $C \sub A_\delta$ with $A_\delta \not\sub^* C$ such that $C \not\sub^* B$ for all $B \in \B$. Similarly,
if $\gamma$ has countable cofinality, we obtain $C \sub^* A_\delta$ for $\delta < \gamma$ such that $C \not\sub^* B$ for all $B \in \B$.
In either case, $C$ can be added to $\T$, showing that $\T$ is not maximal. If $\cf (\gamma) > \omega_1$, we immediately obtain
a contradiction by the previous lemma.

So assume $cf (\gamma) = \omega_1$. By the previous lemma, using again minimality and countable levels,
we see that there is a cofinal subset of order type $\omega_1$ of branching nodes in $b$. Furthermore,
all but countably many of the branches branching off from $b$ must have length exactly $\gamma$: they cannot
be shorter by minimality, and not longer by countable levels. In particular, we may find a branching
node $A_{\la\ra} = A_{\alpha_0} \in b$ such that a branch $b'$ branching off from $b$ in $A_{\la\ra}$ has length $\gamma$.
Applying this argument again to both $b$ and $b'$, we find branching nodes $A_{\la 0 \ra} \in b$ and
$A_{ \la 1 \ra } \in b'$ above $A_{\la\ra}$. Let $\alpha_1 > \alpha_0$, $\alpha_1 < \gamma$, be such that the level of $A_{\la 0 \ra}$
and $A_{\la 1 \ra}$ is below $\alpha_1$. Iterating this procedure, we construct nodes $A_s$, $s \in\twolom$,
in $\T$ such that $A_s$ and $A_t$ are incomparable for incomparable $s$ and $t$, and $A_t \sub^* A_s$
for $t$ extending $s$. Furthermore, the level of all $A_s$, $s\in 2^n$, is below $\alpha_n < \gamma$, and
the $\alpha_n$ form a strictly increasing sequence of ordinals. Let $\alpha_\omega = \bigcup_n \alpha_n$.
Clearly $\alpha_\omega < \gamma$. Thus, by minimality, for each $f \in \twoom$ there is $A_f \in \T$ with $A_f \sub^* A_{f\re n}$
for all $n$ on level $\alpha_\omega$. In particular, the level $\alpha_\omega$ of $\T$ has size $\cc$,
a contradiction.
\end{proof}



\section{Forcing: matrix trees}

Recall that two sets $A, B \in \omoms$ are {\em almost disjoint} if $A \cap B$ is finite. $\A \sub \omoms$
is an {\em almost disjoint family} ({\em a.d. family}, for short) if any two distinct members of $\A$
are almost disjoint. If $\F \sub \omoms$ has the {\em finite intersection property}, that is, $\bigcap F$ is 
infinite for every finite $F \sub \F$, a set $C \in \omoms$ is called a {\em pseudointersection} of $\F$
if $C \sub^* A$ for all $A \in \F$.

Let $\F$ be a filter on $\omega$ containing all cofinite sets. {\em Mathias forcing} with $\F$,
written $\MM (\F)$, consists of all pairs $(s, A)$ such that $s \in \omloms$, $A \in \F$, and
$\max (s) < \min (A)$. $\MM (\F)$ is ordered by stipulating that $(t,B) \leq (s,A)$ if 
$s \sub t \sub s \cup A$ and $B \sub A$. It is well-known and easy to see that $\MM (\F)$
is a $\sigma$-centered forcing which generically adds a pseudointersection $X$ of $\F$
such that $X$ has infinite intersection with all $\F$-positive sets of the ground model.
Here $C \sub \omoms$ is {\em $\F$-positive} if $C \cap A$ is infinite for all $A \in \F$.

\begin{defi}   \label{matrixtree}
Let $\gamma$ be an ordinal. Say that a tree $\T = \{A_\alpha^\beta : \alpha, \beta \leq \gamma \}$ in $\P(\omega) / \fin$ is a 
{\em matrix tree} if 
\begin{romanenumerate}
\item for $\alpha \leq \gamma$, $\{ A_\alpha^\beta : \beta \leq \gamma \}$ is the $\alpha$-th level of $\T$,
\item for $\beta \leq \gamma$ and $\alpha < \alpha' \leq \gamma$, $A_{\alpha'}^\beta \sub^* A_\alpha^\beta$, 
\item for finite $D \sub \gamma +1$ and $\beta \notin D$, $A_0^\beta \sem \bigcup_{\beta' \in D} A_0^{\beta'}$
   is infinite, 
\item for $\alpha > 0$, $\{ A_\alpha^\beta : \beta \leq \gamma \}$ is an a.d. family, and
\item for $\beta \neq \beta '$, $A_\gamma^\beta$ and $A_0^{\beta '}$ are almost disjoint.
\end{romanenumerate}
\end{defi}

\begin{lem}  {\rm (Extension Lemma)}  \label{extension-pomegamodfin}
Assume $\T = \{A_\alpha^\beta : \alpha, \beta \leq \gamma \}$ is a matrix tree. Then there is a ccc forcing
end-extending $\T$ to a matrix tree $\T' =  \{A_\alpha^\beta : \alpha, \beta \leq \gamma + 2\}$ such that no $C \in \omoms$ from
the ground model can be added to $\T'$.
\end{lem}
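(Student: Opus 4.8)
The plan is to realize the end-extension by a single $\sigma$-centered (hence ccc) forcing of Mathias type, and to read off both the matrix structure of $\T'$ and the non-addability of ground model reals from genericity. For each old column $\beta\le\gamma$ let $\F_\beta$ be the filter generated by the branch $\{A_\alpha^\beta:\alpha\le\gamma\}$ together with the cofinite sets; since a column is a $\sub^*$-decreasing chain, $\F_\beta$ is proper. A condition specifies finite stems for the prospective bottom nodes $A_{\gamma+1}^\beta,A_{\gamma+2}^\beta$ of every column $\beta\le\gamma+2$, together with finitely many reservoir side conditions drawn from the $\F_\beta$ and from the analogous filters of the two new columns, exactly as in $\MM(\F)$; the new nodes become pseudointersections with $A_{\gamma+2}^\beta\sub^* A_{\gamma+1}^\beta\sub^* A_\alpha^\beta$ for all $\alpha\le\gamma$. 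Conditions with equal stems are compatible, so the forcing is $\sigma$-centered. The two new branches $A_\alpha^{\gamma+1},A_\alpha^{\gamma+2}$ are introduced as fresh columns of the matrix, almost disjoint from the old matrix at every positive level and independent from the old level-$0$ nodes, with only their bottom nodes added generically.

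I would then verify the matrix conditions for $\T'$ with $\gamma$ replaced by $\gamma+2$. Conditions (i) and (ii) are immediate, since each new node is forced almost inside all higher nodes of its column. For (iv), a pseudointersection of an almost disjoint system stays almost disjoint from the others, and the reservoir side conditions keep the two new columns almost disjoint from everything at positive levels, so each new level is again an almost disjoint family. Condition (iii) refers only to level $0$, unchanged for old columns, so it reduces to the independence of the two new level-$0$ nodes from the old ones. Finally (v) is inherited for old $\beta$ because $A_{\gamma+2}^\beta\sub^* A_\gamma^\beta$ is almost disjoint from $A_0^{\beta'}$ by (v) for $\T$, and is arranged for the new columns. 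Thus $\T'$ is a matrix tree end-extending $\T$.

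The heart of the proof is that no ground model $C\in\omoms$ can be added to $\T'$. By the maximality criterion of the introduction, $C$ is \emph{addable} precisely when no node of $\T'$ is $\sub^* C$ and the nodes of $\T'$ almost containing $C$ form a chain; so it suffices to force that every ground model $C$ either almost contains some node (first clause) or lies below two incomparable nodes (second clause). Since any two incomparable nodes of positive level in different columns are almost disjoint, an infinite $C$ cannot lie below two such nodes; hence a $C$ not already covered by $\T$ must satisfy $A_0^\beta\supseteq^* C$ for at most one old column $\beta$ and must almost contain no old node. For each such problematic $C$ I would show that the set $D_C$ of conditions forcing $C$ to be covered is dense, after which genericity closes the argument. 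Here the Mathias property is used as follows: for a problematic $C$ the complement $\omega\sem C$ is necessarily $\F_\beta$-positive for every column (otherwise some node would be $\sub^* C$), so a generic bottom node cannot be driven inside $C$, and the covering must be achieved through the second clause, by placing $C$ below two incomparable level-$0$ nodes—one old, if $C$ sits in a single old column, and otherwise both furnished by the two new columns.

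The main obstacle, and the reason two new columns and two new levels are needed, is exactly this covering of the problematic reals: one must choose the two new branches (and steer their generic pseudointersections) so that \emph{every} ground model real evading the old columns is caught, either by being routed below the two independent new level-$0$ nodes or by meeting the bottom almost disjoint family in a way that produces a node beneath it. I expect the decisive tool to be a maximality (reaping) property of the bottom level over the ground model, so that no ground model real can avoid the entire new structure; the second new level and second new column supply the room to re-establish this while repairing (v) and the almost-disjointness at the bottom. The delicate point is that the density argument for a given $C$ must be carried out without destroying positivity for other reals or the independence in (iii) and the almost-disjointness in (iv)–(v), and uniformly enough to keep the whole poset ccc; verifying that a condition can always be extended to cover $C$ under all these constraints is the core technical content of the lemma.
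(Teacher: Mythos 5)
You have the right silhouette --- a Mathias-type $\sigma$-centered forcing, two new columns, and a dichotomy ``either some new bottom node lands inside $C$, or $C$ is caught below two new incomparable level-$0$ nodes'' --- but you stop exactly where the lemma's content begins, and the two devices that make the dichotomy work are absent from your setup. First, the paper does not use column filters $\F_\beta$ generated by the branches; it fixes a \emph{single} filter $\F$ that is \emph{maximal} with respect to the property that every $F\in\F$ meets every top node $A_\gamma^\beta$ infinitely, and forces with the product $\MM(\F)\times\MM(\F)$ to get two generics $X_0,X_1$. The maximality of $\F$ is what resolves the ``problematic'' case: if no $F\cap A_\gamma^\beta$ is $\sub^* C$, i.e.\ $(F\cap A_\gamma^\beta)\sem C$ is always infinite, then $\F\cup\{\omega\sem C\}$ still generates a filter with the defining property, so $\omega\sem C\in\F$ and hence $X_0\cup X_1\sub^*\omega\sem C$. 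With your branch-generated $\F_\beta$'s, knowing that $\omega\sem C$ is merely $\F_\beta$-positive for every $\beta$ gives you nothing of the sort (a Mathias generic meets positive sets infinitely but is only almost contained in \emph{members} of the filter), and no reaping property of the bottom level will substitute for this: you need the ambient maximal filter so that \emph{every} evading $C$ has its complement inside the filter that the generics diagonalize.

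Second, the new level-$0$ nodes are not ``fresh columns chosen independently'': they are defined to be $A_0^{\gamma+1}=\omega\sem X_0$ and $A_0^{\gamma+2}=\omega\sem X_1$, the complements of the two Mathias generics, while the new bottom nodes of the old columns are $A_{\gamma+1}^\beta=X_0\cap X_1\cap A_\gamma^\beta$. This single choice simultaneously (a) yields $C\sub^*(\omega\sem X_0)\cap(\omega\sem X_1)=A_0^{\gamma+1}\cap A_0^{\gamma+2}$ in the second case of the dichotomy, so $C$ lies below two incomparable nodes of $\T'$; and (b) makes the new bottom nodes literally disjoint from the new level-$0$ nodes, which is what salvages clause (v) and the almost-disjointness you were worried about. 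Note also that in the second case the two incomparable nodes are \emph{always} the two new level-$0$ nodes; your case split involving ``one old level-$0$ node if $C$ sits in a single old column'' is neither needed nor how the argument closes. Without the maximal filter and without tying the new level-$0$ nodes to the complements of the generics, the density argument you defer to (``the core technical content'') cannot be carried out, so as it stands the proposal has a genuine gap rather than an alternative proof.
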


\begin{proof}
Let $\F$ be a maximal filter with the property that for all $F \in \F$ and all $\beta \leq \gamma$,
$F \cap A_\gamma^\beta$ is infinite. Force with the product $\MM (\F) \times \MM (\F)$. Let $X_0$  and
$X_1$ be the two generic subsets of $\omega$. We let $A_{\gamma+1}^\beta = X_0 \cap X_1 \cap A_\gamma^\beta$.
Clearly this set is infinite by genericity. Choose $A_{\gamma + 2}^\beta \sub A_{\gamma+1}^\beta$
arbitrarily. We also let $A_0^{\gamma +1} = \omega \sem X_0$ and $A_0^{\gamma+2} = \omega \sem X_1$.
Then clearly $A_{\gamma +1}^\beta$ and $A_0^{\beta'}$ are disjoint for $\beta \leq \gamma$ and  $\beta' \in \{ \gamma+1, \gamma + 2 \}$.
A straightforward genericity argument shows that clause (iii) is still satisfied. Thus we can easily add sets $A_1^{\gamma+1}
\sub A_0^{\gamma+1}$ and $A_1^{\gamma+2} \sub A_0^{\gamma+2}$ by ccc forcing such that 
$A_0^\beta$ and $A_1^{\beta'}$ are almost disjoint for $\beta' \in \{ \gamma+1, \gamma + 2 \}$ and 
any $\beta \neq \beta'$. Finally let $\{ A_\alpha^{\beta'} : 1 < \alpha \leq \gamma + 2 \}$ be decreasing chains
below $A_1^{\beta'}$ for $\beta' \in \{ \gamma+1, \gamma + 2 \}$. It follows now that properties (iv) and (v) in the definition
of matrix tree hold. Also, $\T'$ is indeed a tree.

Let $C \in \omoms$. If $F \cap A_\gamma^\beta \sub^* C$ for some $\beta\leq\gamma$ and some $F \in \F$, 
then $A_{\gamma+1}^\beta \sub^* C$, and $C$ cannot be added to $\T'$. So assume this is not the case,
that is, $(F \cap A_\gamma^\beta) \sem C$ is infinite for all $\beta \leq\gamma$ and $F \in \F$. Then 
$\omega \sem C \in \F$ by the maximality of $\F$. Hence $X_0 \cup X_1 \sub^* \omega \sem C$
and $C \sub^* (\omega \sem X_0) \cap (\omega \sem X_1) = A_0^{\gamma +1} \cap A_0^{\gamma +2}$
and, again, $C$ cannot be added to $\T'$. This completes the proof of the lemma.
\end{proof}

Recall that $\cov (\M)$ is the least size of a family of meager sets covering the real line.
It is well-known that $\cov (\M) \leq \rr$~\cite[Theorem 5.19]{Bl10} and that adding Cohen reals increases $\cov (\M)$~\cite[Subsection 11.3]{Bl10}.

\begin{thm}   \label{tr-pomegamodfin}
Let $\kappa \leq \lambda$ be regular uncountable cardinals with $\lambda^\omega = \lambda$. There is a
ccc generic extension with $\tr = \kappa$ and $\cc = \lambda$.
\end{thm}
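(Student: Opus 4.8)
The plan is to force both inequalities $\tr \le \kappa$ and $\kappa \le \tr$ in a single ccc extension that also has $\cc = \lambda$. For the upper bound I would generically build a maximal matrix tree (Definition~\ref{matrixtree}) of height and width $\kappa$, so that it has size $\kappa$ and witnesses $\tr \le \kappa$. For the lower bound I would use the chain $\cov (\M) \le \rr \le \tr$ recorded above, so that it suffices to arrange $\cov (\M) \ge \kappa$; since the forcing of the Extension Lemma~\ref{extension-pomegamodfin} is the product $\MM (\F) \times \MM (\F)$, which adds a Cohen real, repeatedly applying it will pump $\cov (\M)$ up.

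The difficulty is that a maximal tree must catch all $\lambda$ reals while having only $\kappa$ nodes, so a single finite-support iteration will not do: there the extension steps would have to be cofinal in the length-$\lambda$ iteration, forcing the tree to have size $\lambda$. I would instead run a two-dimensional (matrix) finite-support ccc iteration $\langle \PP_{\alpha,\xi} : \alpha \le \kappa,\ \xi \le \lambda \rangle$, with complete embeddings $\PP_{\alpha,\xi} \embed \PP_{\alpha',\xi'}$ for $\alpha \le \alpha'$ and $\xi \le \xi'$. The short (vertical) coordinate $\alpha \le \kappa$ carries the Extension Lemma, adjoining two new levels of the matrix tree at each step; the long (horizontal) coordinate $\xi \le \lambda$ blows $\cc$ up to $\lambda$. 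To keep $|\PP_{\kappa,\lambda}| = \lambda$, and hence $\cc = \lambda$ (here I use $\lambda^\omega = \lambda$), the maximal filter $\F$, and with it the catching of reals at vertical step $\alpha$, must be distributed along the horizontal coordinate: at position $(\alpha,\xi)$ one only catches the reals already present in the small model $V_{\alpha,\xi}$, building the maximal filter up cofinally in $\xi$, so that every iterand stays of size $<\lambda$.

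With the matrix in place the two verifications should go through by reflection. Since the vertical iteration is finite support and ccc and $\cf(\kappa) = \kappa > \omega$, every real $C$ of the final model $V_{\kappa,\lambda}$ already lies in some $V_{\alpha,\lambda}$ with $\alpha < \kappa$; the Extension Lemma applied at vertical step $\alpha$ then guarantees that $C$ is caught by level $\alpha+1$ of the tree (either there is a node $A$ with $A \sub^* C$, or $C$ is almost contained in the intersection of two branching level-$0$ nodes), so the union tree $\T = \bigcup_{\alpha<\kappa} \T_\alpha$ of size $\kappa$ is maximal and $\tr \le \kappa$. For the lower bound, the $\kappa$ vertical applications of $\MM (\F) \times \MM (\F)$ add a cofinal $\kappa$-sequence of Cohen reals; any family of fewer than $\kappa$ meager sets again reflects into some $V_{\alpha,\lambda}$ with $\alpha<\kappa$, and is evaded by a Cohen real added at a later vertical stage, whence $\cov (\M) \ge \kappa$. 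Combining, $\kappa \le \cov (\M) \le \rr \le \tr \le \kappa$, so $\tr = \kappa$ while $\cc = \lambda$.

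I expect the main obstacle to be the bookkeeping that makes the matrix iteration coherent: arranging the complete embeddings to commute, distributing the construction of the maximal filters (and therefore the catching of reals) along the long coordinate so that each iterand stays small and $\cc$ lands exactly on $\lambda$, and checking that the Extension Lemma can be applied uniformly over every intermediate model $V_{\alpha,\xi}$. The delicate point is to confirm that this distributed catching really does reflect, that is, that the size-$\kappa$ tree genuinely handles all $\lambda$ reals and that $\cov (\M)$ does not overshoot $\kappa$ despite the horizontal reals, which is exactly what the maximality proof together with the inequality $\cov (\M) \le \tr$ should certify.
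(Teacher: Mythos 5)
Your lower-bound argument ($\kappa \le \cov(\M) \le \rr \le \tr$) and your use of the Extension Lemma for the upper bound match the paper, but the architecture you build them into does not, and the reason you give for needing that architecture is mistaken. You reject a single finite-support iteration on the grounds that the tree must catch $\lambda$ reals, so the extension steps would have to be cofinal in a length-$\lambda$ iteration. The paper's solution is to first add all $\lambda$ Cohen reals in one initial step (a product, not $\lambda$ iteration stages) and then run an ordinary finite-support ccc iteration of length only $\kappa$ on top. At an even successor stage $\gamma$ the Extension Lemma is applied with a maximal filter chosen in the full intermediate model $V_\gamma$, so it catches \emph{all} reals of $V_\gamma$ at once --- including the $\lambda$ Cohen reals --- while adding only two new levels to the tree; there is no need to keep the iterands small, since ccc together with $|\PP_\kappa| \le \lambda$ and $\lambda^\omega = \lambda$ already gives $\cc \le \lambda$ by counting nice names. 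Every real of the final model appears in some $V_\gamma$ with $\gamma < \kappa$ by regularity of $\kappa$, so the union tree of size $\kappa$ is maximal; and the Cohen reals added at limit stages of the finite-support iteration (rather than your appeal to $\MM(\F) \times \MM(\F)$ adding a Cohen real, which the paper neither needs nor proves) give $\cov(\M) \ge \kappa$.

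As written, your matrix iteration is a plan rather than a proof: the entire technical burden sits in the two issues you yourself flag as ``the main obstacle'' and do not resolve. To make the horizontal coordinate work you would need $\MM(\F_{\alpha,\xi})$ to embed completely into $\MM(\F_{\alpha,\xi'})$ for $\xi < \xi'$, which is not automatic for Mathias forcing with growing filters, and you would need $\bigcup_{\xi<\lambda}\F_{\alpha,\xi}$ to be a \emph{maximal} filter with the required positivity property in $V_{\alpha,\lambda}$ --- maximality is exactly what the Extension Lemma uses to conclude $\omega \sem C \in \F$ when $C$ is not caught from above, so it cannot be weakened. Such matrix constructions can be made to work in other contexts, but here the machinery is unnecessary, and until those two points are established your argument has a genuine gap.
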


\begin{proof}
First add $\lambda$ Cohen reals. Then perform a finite support iteration $\la \PP_\gamma , \dot \QQ_\gamma :
\gamma < \kappa \ra$ of ccc forcing. Let $V_\gamma$ denote the intermediate model. If $\gamma$ is an even
ordinal, the model $V_{\gamma + 1}$ will contain a matrix tree $\T_\gamma = \{A_\alpha^\beta : \alpha, \beta \leq \gamma \}$
such that
\begin{itemize}
\item for $\gamma < \delta$, $\T_\delta$ end-extends $\T_\gamma$,
\item if $\gamma = \delta + 2$ is additionally a successor ordinal, then no $C$ from $V_\gamma$ can be added
   to the tree $\T_\gamma$.
\end{itemize}
If $\gamma$ is an odd ordinal, $\dot \QQ_\gamma$ is the trivial forcing. If $\gamma = \delta + 2$ is an even
successor ordinal, $\dot \QQ_\gamma$ is the forcing from the preceding lemma applied to the tree $\T_\delta
\in V_{\delta + 1} \sub V_\gamma$. If $\gamma$ is a limit ordinal, define $\dot \QQ_\gamma$ as follows:
let $\T_{<\gamma} = \bigcup_{\delta < \gamma} \T_\delta = \{A_\alpha^\beta : \alpha, \beta < \gamma \}$.
First add pseudointersections $A_\gamma^\beta$ to the decreasing chains $\{ A_\alpha^\beta : \alpha < \gamma \}$
for $\beta < \gamma$ (if $cf (\gamma) = \omega$, they can be constructed outright, otherwise they can be forced by ccc forcing).
Next add a set $A_0^\gamma$ almost disjoint from $A_0^\beta$, $\beta < \gamma$, by ccc forcing. This can be done by (iii)
and will preserve (iii) in Definition~\ref{matrixtree}. Finally let $\{ A_\alpha^\gamma : 0 < \alpha \leq \gamma  \}$ be a decreasing chain
below $A_0^\gamma$. Put $\T_\gamma = \{ A_\alpha^\beta : \alpha, \beta \leq \gamma \}$.
Then (iv) and (v) in Definition~\ref{matrixtree} clearly hold as well. This completes the definition of the iteration.

Clearly $\T_\kappa = \bigcup_{\gamma < \kappa} \T_\gamma$ is a maximal tree of size $\kappa$ by Lemma~\ref{extension-pomegamodfin}.
Therefore $\tr \leq \kappa$. On the other hand, $\tr \geq \rr \geq \cov (\M) \geq \kappa$ because of the Cohen
reals added in limit stages of the iteration.
\end{proof}

Note that the tree $\T_\kappa$ constructed in this proof has height and width $\kappa$.

\begin{thm}  \label{tr-spectrum}
Let $C$ be a set of regular uncountable cardinals. There is a ccc generic extension such that
for all $\lambda \in C$, there is a maximal tree in $\P (\omega) / \fin$ of size $\lambda$.
\end{thm}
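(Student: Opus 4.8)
The plan is to take the single-tree construction of Theorem~\ref{tr-pomegamodfin} and run it simultaneously for all $\lambda \in C$ in one big finite support iteration, using a bookkeeping function to interleave the construction steps for the various target cardinals. First I would fix $\lambda^* = \sup C$ (or a suitable regular cardinal above it with $(\lambda^*)^\omega = \lambda^*$) and start by adding $\lambda^*$ Cohen reals, so that in the final model $\cc = \lambda^*$ and, more importantly, $\rr \geq \cov(\M) \geq \lambda^*$; this guarantees no maximal tree can have size below $\lambda^*$ and, in particular, forces each of the trees we build to have size exactly its intended $\lambda$. For each $\lambda \in C$ I would build an increasing sequence of matrix trees $\T^\lambda_\gamma = \{A^{\lambda,\beta}_\alpha : \alpha,\beta \leq \gamma\}$ of length $\lambda$, using completely separate underlying sets of integers for different $\lambda$'s so the constructions do not interfere, and apply the Extension Lemma~\ref{extension-pomegamodfin} (relativized to the appropriate copy of $\omega$) at the successor steps to seal the tree against ground-model reals.

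The key technical device is a bookkeeping system that assigns iteration stages to (target cardinal, task) pairs. At each stage $\gamma$ of the iteration, the bookkeeping tells us which $\lambda \in C$ we are currently working on and which task to perform: either an Extension-Lemma step for $\T^\lambda$, or a limit step adding pseudointersections $A^{\lambda,\beta}_\gamma$ and a fresh almost-disjoint branch $A^{\lambda,\gamma}_0$. The crucial point is that the enumeration of candidate sets $C \in \omoms$ to be killed for the tree $\T^\lambda$ must be arranged so that every real appearing at some intermediate stage is eventually handled at a later stage devoted to $\lambda$; since the whole iteration has length at most $\lambda^*$ and we are doing a finite support ccc iteration, every real appears at some stage below $\lambda^*$, and a standard catch-up/diagonalization bookkeeping over the $|C| \cdot \lambda^*$ many tasks can be set up to ensure each tree $\T^\lambda$ is sealed against \emph{all} reals of the final model. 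Because all forcings used (Cohen, the $\MM(\F) \times \MM(\F)$ of the Extension Lemma, and the auxiliary ccc forcings for branches and pseudointersections) are ccc, the finite support iteration remains ccc, and the usual $\Delta$-system and nice-name arguments give $\cc = \lambda^*$ and the requisite reflection so that each $\T^\lambda$ really has size $\lambda$.

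The main subtlety, and the step I expect to be the genuine obstacle, is ensuring that each individual tree $\T^\lambda$ ends up \emph{maximal} of size exactly $\lambda$, rather than being sealed only against the reals available at the stages where we worked on it. Two issues must be reconciled. First, for the smaller cardinals $\lambda < \lambda^*$ we need the tree to be complete against every real of the final model, yet the iteration is much longer than $\lambda$; the resolution is that each $\T^\lambda$ is built of length $\lambda$ but the forcing adding its $\alpha$-th level is scheduled at stages cofinal in the whole iteration, so by ccc-ness and a reflection argument every real shows up before the corresponding sealing step. This requires choosing the lengths and the bookkeeping so that $\cf(\lambda)$-many cofinal sealing opportunities exist; the regularity of each $\lambda$ is what makes this work, exactly as the width-equals-height phenomenon in the single-cardinal case. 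Second, one must check that interleaving does not destroy clause (iii) of Definition~\ref{matrixtree} across the infinitely many tasks, but since the integer-sets are disjoint across distinct $\lambda$ and the genericity arguments of Theorem~\ref{tr-pomegamodfin} are local to each copy of $\omega$, clause (iii) and the remaining matrix-tree clauses are preserved task by task exactly as before.
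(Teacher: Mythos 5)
Your proposal has several genuine gaps, and the most serious ones stem from importing the wrong pieces of Theorem~\ref{tr-pomegamodfin}. First, adding $\lambda^*=\sup C$ many Cohen reals to force $\rr\geq\cov(\M)\geq\lambda^*$ is self-defeating: since $\tr\geq\rr$, this would mean \emph{every} maximal tree in $\P(\omega)/\fin$ has size at least $\lambda^*$, so no tree of size $\lambda<\lambda^*$ could be maximal --- exactly the opposite of what the theorem asks for. (Your sentence ``this guarantees no maximal tree can have size below $\lambda^*$ and, in particular, forces each of the trees we build to have size exactly its intended $\lambda$'' is internally contradictory.) The paper's proof of Theorem~\ref{tr-spectrum} adds no Cohen reals at all; the size of each tree is controlled purely by construction, and nothing is claimed about the nonexistence of maximal trees of other sizes. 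Second, building the trees on ``completely separate underlying sets of integers'' cannot work: a tree whose nonroot nodes all live inside a proper infinite piece $P\subsetneq\omega$ is never maximal in $\P(\omega)/\fin$, because any infinite $C\sub\omega\sem P$ is almost disjoint from every nonroot node and can be adjoined as a new child of the root. All the trees must be sealed against all of $\omoms$, hence must live on the same $\omega$; the paper simply runs the constructions side by side and there is no interference to worry about.

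Third, and most substantively, your bookkeeping does not resolve the real tension you correctly identify. For $\lambda<\lambda^*$ the tree $\T^\lambda$ has only $\lambda$ many levels, hence only $\lambda$ many sealing opportunities, and these cannot be cofinal in an iteration of length $\lambda^*$ when $\cf(\lambda^*)>\lambda$; a real appearing after the last sealing step for $\T^\lambda$ would never be handled, and no ``catch-up'' enumeration fixes this because maximality must hold against the \emph{final} model. The paper's solution is structural rather than combinatorial: the iteration has length $\kappa=\min C$, and at a \emph{single} stage $\gamma$ of this short iteration, for each $\lambda>\kappa$ one forces with a finite-support iteration of length $\lambda\cdot 2$ that appends a whole block of $\lambda\cdot 2$ new levels to $\T^\lambda$, with the Extension Lemma applied throughout the block so that by its end $\T^\lambda$ is sealed against all reals of $V_\gamma$. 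The resulting tree has height $\lambda\cdot\kappa$, hence size $|\lambda\cdot\kappa|=\lambda$, and there are $\kappa$ many sealing blocks, cofinal in the length-$\kappa$ iteration, so every real of the final model (which appears in some $V_\gamma$, $\gamma<\kappa$, by regularity of $\kappa$ and the ccc) is eventually handled. Your proposal is missing this ``one long block per stage'' idea, which is the actual content of the theorem beyond Theorem~\ref{tr-pomegamodfin}.
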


\begin{proof}
Let $\kappa = \min C$. For $\lambda \in C$ with $\lambda > \kappa$, let $\epsilon_\lambda = \lambda \cdot \kappa$.
Make a finite support iteration $\la \PP_\gamma , \dot \QQ_\gamma : \gamma < \kappa \ra$ of ccc forcing such that
\begin{itemize}
\item if $\gamma = \delta +2$ is even successor, then $\dot \QQ_\gamma$ is defined exactly as in the proof of the previous theorem
   and end-extends the matrix tree $\T_\delta = \{ A_\alpha^\beta : \alpha,\beta \leq \delta\} \in V_{\delta + 1}$ to the 
   matrix tree $\T_\gamma = \{ A_\alpha^\beta : \alpha,\beta \leq  \gamma \} \in V_{\gamma + 1}$,
\item if $\gamma = \delta + 1$ is odd, then, for each $\lambda \in C\sem \{ \kappa\}$, $\dot\QQ_\gamma$ end-extends
   a matrix tree $\T_\delta^{\lambda} = \{ A_\alpha^\beta : \alpha,\beta \leq \lambda \cdot \delta \} \in V_\gamma$ to a
   matrix tree $\T_{\delta + 2}^{\lambda} = \{ A_\alpha^\beta : \alpha,\beta \leq \lambda \cdot (\delta + 2) \} \in V_{\gamma + 1}$ using a
   finite-support product of finite-support iterations of length $\lambda \cdot 2$ for each $\lambda \in C$ as in the proof of the previous theorem,
\item if $\gamma$ is limit, $\dot \QQ_\gamma$ end-extends $\T_{<\gamma}$ to the matrix tree $\T_\gamma$ as in the
   proof of the previous theorem and also end-extends the $\T_{<\gamma}^\lambda := \{ A_\alpha^\beta: \alpha,\beta
   < \lambda \cdot \gamma \} \in V_\gamma$ to matrix trees $\T_\gamma^\lambda = \{ A_\alpha^\beta: \alpha,\beta
   \leq \lambda \cdot \gamma \} \in V_{\gamma + 1}$.
\end{itemize}
In the final extension, let $\T_\kappa = \bigcup_{\gamma < \kappa} \T_\gamma$ and $\T_\lambda =
\bigcup_{\gamma < \kappa} \T_{\gamma}^\lambda$ for $\lambda \in C \sem \{ \kappa \}$. By construction,
all these trees $\T_\lambda$ are maximal trees, and their respective size is $\lambda$. 
\end{proof}

We do not know whether there is a way to control the $\lambda$ for which a maximal tree of size $\lambda$
is added in this proof.

\begin{ques}
Let $C$ be a set of regular cardinals (possibly satisfying some additional condition). Is there a ccc
forcing extension in which there is a maximal tree of size $\lambda$ iff $\lambda \in C$?
\end{ques}

Notice that for $\lambda > \kappa := \min C$, the trees in the previous proof all have width $\lambda = | \lambda \cdot \kappa|$
and height $\lambda \cdot \kappa$. In particular, by pruning the branches while keeping
maximality, we easily see that we can obtain maximal trees of width $\lambda$ and height $\kappa$ 
as well. Therefore, we see that all maximal trees $\T$ of regular height constructed so far either have width $|\T|$ and height $\omega$
(see Theorem~\ref{tr-pomega} below or~\cite[Theorem 4.1]{CCHM16})
or width and height $|\T|$ or width $|\T|$ and height some uncountable regular cardinal below $|\T|$. We do not
know whether there can be a maximal tree whose width is smaller than the cofinality of its height:

\begin{ques}  \label{width-height}
Is it consistent that there is a maximal tree with levels of size $\omega_1$ and height $\omega_2$
(with all branches of length $\omega_2$)?
\end{ques}



\section{Forcing: wide-branching trees}

\begin{defi}
Let $\gamma$ be an ordinal. Say that a tree $\T = \{ A_s : s \in \gamma^{< \omega} \}$ in $\P (\omega)$ is a {\em wide-branching tree}
if 
\begin{romanenumerate}
\item for all $n$, $\{ A_s : s \in \gamma^n \}$ is the $n$-th level of $\T$,
\item for $s \sub t$ in $\gamma^{< \omega}$, $A_t \sub A_s$,
\item for finite $D \sub \gamma$ and $\beta \notin D$, $A_{\la \beta \ra} \sem \bigcup_{\alpha \in D} A_{\la \alpha \ra}$ is infinite,
\item for $n \geq 2$, $\{ A_s : s \in \gamma^n \}$ is an a.d. family, and
\item for all $\alpha \leq\beta< \gamma$ and $s \in \gamma^{\geq 2}$, if $s(0) \neq \alpha$ and $\beta \in \ran (s)$ then
   $A_s$ and $A_{\la \alpha \ra}$ are almost disjoint.
\end{romanenumerate}
\end{defi}

\begin{lem} {\rm (Extension Lemma)}  \label{extension-pomega}
Let $\gamma \geq \omega$ be a limit ordinal.
Assume $\T = \{ A_s : s \in \gamma^{< \omega} \}$ is a wide-branching tree.
Then there is a ccc forcing end-extending $\T$ to a wide-branching tree $\T' = \{ A_s : s \in (\gamma + \omega)^{< \omega} \}$ 
such that for every $C \in \omoms$ from the ground model, either $A_s \sub C$ for some
$s \in (\gamma+\omega)^{<\omega}$ or $C \sub A_{\la\gamma + n\ra} \cap A_{\la\gamma +n+1\ra}$
for some $n \in\omega$.
\end{lem}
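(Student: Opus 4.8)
The plan is to mirror the proof of the Extension Lemma~\ref{extension-pomegamodfin} for $\P (\omega) / \fin$, with two modifications forced by the fact that we now work in $\P(\omega)$ rather than mod finite: the required containments $A_s \sub C$ and $C \sub A_{\la \gamma+n\ra} \cap A_{\la\gamma+n+1\ra}$ are \emph{literal}, and the branching is wide. First I would fix a filter $\F$ that is maximal among filters compatible with $\T$, in the sense that every $F \in \F$ is a legitimate reservoir for extending $\T$ below its nodes (so that each $F$ meets cofinally many nodes of every maximal branch). I would then force with a length-$\omega$ finite-support iteration of Mathias forcings $\MM (\F)$, producing generics $X_n$, $n \in \omega$, each a pseudointersection of $\F$ meeting every $\F$-positive set infinitely, and I would arrange the iteration so that $\omega \sem \bigcup_{m<n} X_m \in \F_n$ at stage $n$.

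Next I would set the new first-level columns to be $A_{\la \gamma+n\ra} := \omega \sem X_n$ (up to finite adjustments, discussed below) and then, by recursion on the length of $s$, define $A_s$ for every $s \in (\gamma+\omega)^{<\omega}$ that uses at least one ordinal $\geq \gamma$. At each step I choose $A_s$ to be an infinite subset of its parent, almost disjoint from its siblings and from exactly those old first-level nodes $A_{\la\alpha\ra}$ demanded by clause (v); there is room because the $X_n$ meet every $\F$-positive set. Crucially, I would make the branching \emph{locally finite}: each $k \in \omega$ lies in only finitely many immediate successors of any given node. Axioms (i)--(iv) are then routine to check; clause (v), with its condition on $\ran (s)$, is the fussiest bookkeeping, and I would carry it as an invariant of the recursion.

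For the dichotomy, fix a ground-model $C \in \omoms$. If $C$ almost contains some node $A_s$, then by local finiteness of the branching I can pass to an immediate successor $A_t$ of $A_s$ that avoids the finite set $A_s \sem C$, obtaining a genuine node $A_t \sub C$; this gives the first alternative. Otherwise, exactly as in Lemma~\ref{extension-pomegamodfin}, maximality of $\F$ yields $\omega \sem C \in \F$, so each generic satisfies $X_n \cap C$ finite, i.e. $C \sub^* \omega \sem X_n = A_{\la\gamma+n\ra}$ for every $n$.

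The main obstacle is upgrading this mod-finite containment to the literal $C \sub A_{\la\gamma+n\ra} \cap A_{\la\gamma+n+1\ra}$ required by the second alternative, since Mathias forcing guarantees only that $X_n \cap C$ is finite, not empty, and the finite error depends on $C$. I would handle this at the level of the iteration by controlling the stems: because $\omega \sem \bigcup_{m<n} X_m \in \F_n$, the stem of $X_n$ can be pushed past the finitely many points where earlier columns failed, so that for every $\F$-small $C$ some consecutive pair $X_n, X_{n+1}$ is genuinely disjoint from $C$. Here I would use that the first-level nodes may be modified by finite sets without disturbing (i)--(v): only (ii) is a literal-containment axiom, and \emph{enlarging} a root-level node keeps all its descendants below it, so the residual finite discrepancies can be absorbed into the definition of the $A_{\la\gamma+n\ra}$. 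This interplay between mod-finite genericity and the literal containment demanded in $\P(\omega)$ is the delicate heart of the argument; the wide-branching bookkeeping in clause (v) is laborious but, being mod-finite, is comparatively safe.
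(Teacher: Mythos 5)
Your overall skeleton --- a maximal filter $\F$ compatible with $\T$, countably many Mathias generics $X_n$, roots $A_{\la\gamma+n\ra}=\omega\sem X_n$, locally finite branching to absorb finite errors, and a genericity argument producing two consecutive generics inside $\omega\sem C$ --- is indeed the paper's. But there are two genuine gaps. First, your decision to iterate and to put $\omega\sem\bigcup_{m<n}X_m$ into $\F_n$ makes the $X_n$ pairwise almost disjoint, and this is incompatible with clause (v) of the definition you must verify for $\T'$: a node such as $A_{\la 0,\gamma+5\ra}$ has to be almost disjoint from $A_{\la\gamma+n\ra}=\omega\sem X_n$ for every $n\leq 5$, i.e.\ almost contained in $X_0\cap\dots\cap X_5$, which is finite in your setup. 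The paper instead forces with the finite support \emph{product} $\MM(\F)^\omega$, so that the diagonal set $X=\{m: m\in X_n \mbox{ for all } n\leq m\}$ is an infinite pseudointersection of all the $X_n$, and it places every new descendant of an old node $A_s$ inside $B_s=A_s\cap X$; this one device simultaneously secures clause (v) and (see below) the first alternative of the dichotomy. Your worry about upgrading $C\sub^*\omega\sem X_n$ to literal containment is, by contrast, a non-issue: once $\omega\sem C\in\F$, a direct density argument in the product (pick $n$ with $n,n+1$ outside the support of a given condition and put $(\emptyset,\omega\sem C)$ in those coordinates) yields $X_n\cup X_{n+1}\sub\omega\sem C$ outright, with no finite adjustment of the roots.

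Second, your case split is wrong. You split on whether $C$ almost contains some node $A_s$; but the negation of that does not let you invoke maximality of $\F$ to conclude $\omega\sem C\in\F$. The correct split, as in Lemma~\ref{extension-pomegamodfin}, is on whether $F\cap A_s\sub^* C$ for some $F\in\F$ and $s\in\gamma^{<\omega}$. In the affirmative case you must still produce a node of $\T'$ literally inside $C$, and for that the new children of $A_s$ must live inside $A_s\cap X$: then $F\cap A_s\sub^* C$ gives $B_s\sub^* C$, and all but finitely many of the pairwise disjoint children $A_{s\ha\la\gamma+n\ra}\sub B_s$ are contained in $C$. As described, your children are merely infinite subsets of the parent subject to some almost-disjointness constraints, so a $C$ with $F\cap A_{\la 0\ra}\sub C$ for a single $F\in\F$ but containing no node of $\T'$ --- and meeting every $X_n$ infinitely, since $F\cap A_{\la 0\ra}$ is $\F$-positive --- would falsify both alternatives of the lemma.
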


\begin{proof}
Let $\F$ be a maximal filter such that $F \cap A_s$ is infinite for all $s \in \gamma^{<\omega}$
and all $F \in \F$.
Force with the finite support product $\MM (\F)^\omega$ of countably many copies of $\MM (\F)$.
Let $(X_n : n \in\omega )$ be the generic sequence.
Put $X : = \{ m : m \in X_n$ for all $n \leq m \}$. By genericity, $X$ is an infinite pseudointersection of the $X_n$.
For each $s \in \gamma^{<\omega} \sem \{ \la\ra \}$, let
$B_s = A_s \cap X$. Note that for all finite $D, E \sub\gamma$ and all $F\in\F$ such that $s(0) \notin E$,
$F \cap A_s \sem (\bigcup_{\beta \in D} A_{s \ha\la \beta\ra} \cup \bigcup_{\alpha\in E} A_{\la\alpha\ra})$ is infinite.
(To see this, take $\delta > \max E$, $\delta \notin D$. Then $A_{s\ha\la\delta\ra}$ is almost
disjoint from $\bigcup_{\beta \in D} A_{s \ha\la \beta\ra} \cup \bigcup_{\alpha\in E} A_{\la\alpha\ra}$ 
by (iv) and (v), and $F \cap A_{s\ha\la\delta\ra}$ is infinite.) Thus, by genericity, for all finite $D,E \sub \gamma$ with $s(0) \notin E$, 
$B_s \sem (\bigcup_{\beta \in D} A_{s \ha\la \beta\ra}
\cup \bigcup_{\alpha\in E} A_{\la\alpha\ra})$ is infinite. In particular, by a further ccc forcing,
we can add pairwise disjoint sets $A_{s\ha\la\gamma+n\ra}$, $n \in\omega$, contained in $B_s$
such that all of them are almost disjoint from all $A_{s \ha\la \beta\ra}$, $\beta <\gamma$,
and all $A_{\la\alpha\ra}$, $\alpha < \gamma$, with $s(0) \neq\alpha$. This means that clauses (iv) and (v) still hold
for these sets. In particular, they can be added to the tree (that is, they are neither above an element of
the tree, nor below two incomparable elements of the tree). Furthermore, for any $s \in \gamma^{<\omega}\sem \{ \la\ra \}$,
any $n \in\omega$, and any $t \in (\gamma + \omega)^{<\omega}$, we can now
build sets $A_{s \ha \la\gamma + n\ra\ha t}$ contained in $A_{s\ha\la\gamma + n\ra}$ such that all the clauses
are still satisfied. 

Next, let $A_{\la\gamma + n\ra} = \omega \sem X_n$ for $n\in\omega$.
These sets are almost disjoint from any $A_s$ with $s(0) < \gamma$ and $\gamma + n$ 
belonging to $\ran (s)$ for some $n \in \omega$. In particular, property (v) is preserved.
Also, property (iii) still holds by genericity. Hence an additional ccc forcing
adds sets $B_{\la\gamma + n\ra} \sub A_{\la\gamma + n\ra}$
such that $B_{\la\gamma +n\ra}$ and $A_{\la\beta\ra}$ are almost disjoint for $n \in\omega$
and any $\beta \neq\gamma + n$ with $\beta <\gamma+\omega$. Let $\{ A_{\la\gamma + n \ra\ha\la\alpha\ra} : \alpha < \gamma + \omega \}$
be an a.d. family below $B_{\la\gamma + n\ra}$ for $n \in\omega$. More generally, 
for any $n \in\omega$ and any $t \in (\gamma + \omega)^{<\omega}$, we can build sets $A_{\la\gamma + n\ra\ha t}$ contained in
$A_{\la\gamma + n\ra}$ such that all the clauses are still satisfied. This completes the definition of $\T'$,
and it is clear $\T'$ is a wide-branching tree. 

Let $C \in \omoms$. If $F \cap A_s \sub^* C$ for some $F \in \F$ and $s\in \gamma^{<\omega}$,
then $B_s \sub^* C$. In particular, for some $n \in\omega$, $A_{s \ha\la \gamma + n\ra } \sub C$.
Hence, assume that $(F\cap A_s) \sem C$ is infinite for all $F\in\F$ and $s \in \gamma^{<\omega}$. Then 
$\omega \sem C \in \F$ by maximality of $\F$. Therefore $X_n \sub^* \omega\sem C$ for all $n \in \omega$
and, by genericity, there is in fact an $n \in \omega$ such that $X_n \cup X_{n+1} \sub \omega \sem  C$.
Therefore, $C \sub A_{\la\gamma + n\ra} \cap A_{\la\gamma +n +1\ra}$ as required. 
\end{proof}

\begin{thm}  \label{tr-pomega}
Let $\kappa \leq \lambda$ be regular uncountable cardinals with $\lambda^\omega = \lambda$. There is a
ccc generic extension with $\tr = \kappa$, $\cc = \lambda$, and, additionally, there is a maximal tree in
$\P (\omega)$ of size $\kappa$.
\end{thm}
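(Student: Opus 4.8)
The plan is to mirror the proof of Theorem~\ref{tr-pomegamodfin}, replacing matrix trees by wide-branching trees and the matrix Extension Lemma by Lemma~\ref{extension-pomega}. The crucial observation is that the dichotomy in the conclusion of Lemma~\ref{extension-pomega} is exactly the maximality criterion for subtrees of $\P(\omega)$: for a ground-model $C\in\omoms$, either some node $A_s$ satisfies $A_s\sub C$ (so $A_s\leq C$ in $\P(\omega)$), or $C\sub A_{\la\gamma+n\ra}\cap A_{\la\gamma+n+1\ra}$, i.e.\ $C$ lies below the two first-level, hence incomparable, nodes $A_{\la\gamma+n\ra}$ and $A_{\la\gamma+n+1\ra}$. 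Moreover, these two nodes are incomparable mod finite by clause (iii) (as $A_{\la\gamma+n\ra}\sem A_{\la\gamma+n+1\ra}$ is infinite and symmetrically), so the very same dichotomy witnesses maximality in $\P(\omega)/\fin$ as well, which is what will give $\tr\leq\kappa$.

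First I would add $\lambda$ Cohen reals, forcing $\cc\geq\lambda$, and then perform a finite support iteration $\la\PP_\gamma,\dot\QQ_\gamma:\gamma<\kappa\ra$ of ccc forcings. In the ground model (after the Cohen reals) I fix a wide-branching tree $\T_0=\{A_s:s\in\omega^{<\omega}\}$ with alphabet $\omega$; such a tree is built directly by letting the first level be a partition of $\omega$ into infinitely many infinite pieces and refining each block by partitions at the higher levels, which makes clauses (i)--(v) immediate. At each stage I let $\dot\QQ_\gamma$ be the forcing of Lemma~\ref{extension-pomega} applied, inside $V_\gamma$, to the current tree $\T_\gamma$, whose alphabet $\omega\cdot(1+\gamma)$ is a limit ordinal $\geq\omega$ as the lemma requires; this end-extends $\T_\gamma$ to a tree $\T_{\gamma+1}$ with alphabet enlarged by $\omega$ that handles every $C\in\omoms\cap V_\gamma$. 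At limit stages I set $\T_\gamma=\bigcup_{\xi<\gamma}\T_\xi$; since clauses (i)--(v) only constrain finitely many nodes at a time, this union is again a wide-branching tree, with alphabet the limit ordinal $\omega\cdot\gamma$, so the iteration continues. The final object is $\T_\kappa=\bigcup_{\gamma<\kappa}\T_\gamma$, a wide-branching tree with alphabet $\omega\cdot\kappa=\kappa$, hence of height $\omega$, width $\kappa$, and size $|\kappa^{<\omega}|=\kappa$.

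For maximality, given any $C\in\omoms$ in the final extension, I use $\cf(\kappa)=\kappa>\omega$ together with the ccc to find $\gamma<\kappa$ with $C\in V_\gamma$; the application of Lemma~\ref{extension-pomega} at stage $\gamma$ produces nodes of $\T_{\gamma+1}\sub\T_\kappa$ witnessing the above dichotomy for $C$, and since $\T_\kappa$ end-extends $\T_{\gamma+1}$ these witnesses survive. Thus $\T_\kappa$ is maximal in both $\P(\omega)$ and $\P(\omega)/\fin$, providing the required maximal tree in $\P(\omega)$ of size $\kappa$ and the bound $\tr\leq\kappa$. For the reverse bound I argue as in Theorem~\ref{tr-pomegamodfin}: the finite support iteration adds Cohen reals at the cofinally many limit stages of countable cofinality below $\kappa$, so any family of fewer than $\kappa$ meager sets is coded in some $V_{\gamma^*}$ with $\gamma^*<\kappa$ and is avoided by a later limit-stage Cohen real; hence $\cov(\M)\geq\kappa$ and therefore $\tr\geq\rr\geq\cov(\M)\geq\kappa$, yielding $\tr=\kappa$. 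Finally $\cc=\lambda$, since the whole forcing is ccc of size $\lambda$ (using $\lambda^\omega=\lambda$ and $\kappa\leq\lambda$) and so adds at most $\lambda$ reals, while the $\lambda$ Cohen reals already give $\cc\geq\lambda$.

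The main obstacle will be checking that Lemma~\ref{extension-pomega} can be iterated coherently: I must verify that the union taken at each limit stage really is a wide-branching tree with a limit-ordinal alphabet (so the hypothesis $\gamma\geq\omega$ limit is met at the next step), and that the maximal filter $\F$ chosen inside each $\dot\QQ_\gamma$ is set up over the correct intermediate model so that the lemma applies verbatim. A secondary point requiring care is the persistence of the maximality witnesses under end-extension, i.e.\ that a node with $A_s\sub C$, or a pair of incomparable nodes lying above $C$ in $\T_{\gamma+1}$, remain respectively below, or incomparable and above, $C$ in $\T_\kappa$; this is immediate from the definition of end-extension and the fact that the relevant sets are never modified at later stages, but it is the step on which the whole maximality argument rests.
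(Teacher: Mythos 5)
Your proposal is correct and follows essentially the same route as the paper: Cohen reals for $\cov(\M)\geq\kappa$ followed by a length-$\kappa$ finite support iteration of the forcing of Lemma~\ref{extension-pomega}, with unions at limits and the final tree $\T_\kappa$ of alphabet $\kappa$ maximal in both $\P(\omega)$ and $\P(\omega)/\fin$ via the dichotomy in that lemma. The only differences are cosmetic (you apply the extension lemma at every stage with alphabet $\omega\cdot(1+\gamma)$, while the paper acts only at limit stages $\gamma$ with alphabet literally $\gamma$), and you usefully spell out details the paper leaves implicit.
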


\begin{proof}
Add $\lambda$ Cohen reals and then make a finite support iteration $\la \PP_\gamma , \dot \QQ_\gamma : \gamma < \kappa \ra$
of ccc forcing as in the proof of Theorem~\ref{tr-pomegamodfin}. Let $V_\gamma$ be the intermediate model.
If $\gamma$ is a limit ordinal, the model $V_\gamma$ will contain a wide-branching tree $\T_\gamma =
\{ A_s : s \in \gamma^{<\omega} \}$ such that
\begin{itemize}
\item for $\gamma < \delta$, $\T_\delta$ end-extends $\T_\gamma$,
\item for $C \in \omoms \cap V_\gamma$ either $A_s \sub C$ for some $s \in (\gamma + \omega)^{<\omega}$
   or $C \sub A_{\la \gamma + n \ra} \cap A_{\la \gamma + n + 1 \ra}$ for some $n \in \omega$ (in the model
   $V_{\gamma + 1}$ which contains the tree $\T_{\gamma + \omega}$).
\end{itemize}
If $\gamma$ is a successor ordinal, $\dot \QQ_\gamma$ is the trivial forcing. If $\gamma$ is a limit ordinal,
$\dot \QQ_\gamma$ is the forcing from the preceding lemma applied to the tree $\T_\gamma \in V_\gamma$
and yielding the tree $\T_{\gamma + \omega} \in V_{\gamma + 1 }$. Here $\T_\gamma$ is obtained as follows:
if $\gamma = \delta + \omega$, then $\T_\gamma \in V_{\delta + 1}$ has been constructed earlier; if
$\gamma$ is a limit of limits, then $\T_\gamma = \bigcup_{\delta < \gamma} \T_\delta$. 

Clearly, $\T_\kappa = \bigcup_{\gamma < \kappa} \T_\gamma$ is a maximal tree in $\P (\omega)$ of
size $\kappa$ by Lemma~\ref{extension-pomega} which is additionally maximal in $\P (\omega) / \fin$. 
$\tr = \kappa$ follows as in the proof of Theorem~\ref{tr-pomegamodfin}.
\end{proof}

As with Theorem~\ref{tr-spectrum}, the previous result can be extended to yield big spectrum
for the size of maximal trees in $\P (\omega)$.

\begin{thm}  \label{tr-pomega-spectrum}
Let $C$ be a set of regular uncountable cardinals. There is a ccc generic extension such that
for all $\lambda \in C$, there is a maximal tree in $\P (\omega)$ of size $\lambda$ which is additionally
maximal in $\P (\omega) / \fin$.
\end{thm}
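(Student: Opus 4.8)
The plan is to repeat the spectrum construction of Theorem~\ref{tr-spectrum} with wide-branching trees in place of matrix trees and Lemma~\ref{extension-pomega} in place of Lemma~\ref{extension-pomegamodfin}. The payoff of using wide-branching trees is that the conclusion of Lemma~\ref{extension-pomega}---for every ground-model $C \in \omoms$, either $A_s \sub C$ for some $s$, or $C \sub A_{\la \gamma + n \ra} \cap A_{\la \gamma + n + 1 \ra}$ for some $n$---is exactly the maximality criterion for subtrees of $\P (\omega)$. Moreover, the two witnessing nodes $A_{\la \gamma + n \ra}$ and $A_{\la \gamma + n + 1 \ra}$ lie on the same level, hence are incomparable in the tree order; since $A_s \sub C$ implies $A_s \sub^* C$ and $C \sub A_{\la \gamma + n \ra} \cap A_{\la \gamma + n + 1 \ra}$ implies $C \sub^*$ both nodes, the very same witnesses establish maximality in $\P (\omega) / \fin$. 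Thus each tree produced will be simultaneously maximal in both partial orders, as in Theorem~\ref{tr-pomega}.

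Concretely, put $\kappa = \min C$ and run a finite support iteration $\la \PP_\gamma , \dot \QQ_\gamma : \gamma < \kappa \ra$. As in Theorem~\ref{tr-pomega}, the $\kappa$-tree $\T_\kappa = \{ A_s : s \in \kappa^{<\omega} \}$ is obtained by applying Lemma~\ref{extension-pomega} at limit stages, each application extending the index ordinal by $\omega$. In parallel, for every $\lambda \in C \sem \{ \kappa \}$ we maintain wide-branching trees $\T_\gamma^\lambda = \{ A_s : s \in (\lambda \cdot \gamma)^{<\omega} \}$, handled by finite support products (over $\lambda$) of finite support iterations of Lemma~\ref{extension-pomega}, exactly as the matrix trees $\T_\gamma^\lambda$ are handled in Theorem~\ref{tr-spectrum}. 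The only arithmetic change is that each application of Lemma~\ref{extension-pomega} advances an index ordinal by $\omega$ rather than by $2$; since $\omega \cdot \lambda = \lambda$, iterating the lemma $\lambda$ many times (with unions of wide-branching trees taken at limit substages) advances the $\lambda$-index by $\lambda$, and the bookkeeping is arranged, exactly as in Theorem~\ref{tr-spectrum}, so that at main stage $\gamma$ the $\lambda$-tree has index set $(\lambda \cdot \gamma)^{<\omega}$. Every index ordinal arising this way has the form $\lambda \cdot \delta + \omega \cdot \xi$ and is a limit ordinal $\geq \omega$, so the hypothesis of Lemma~\ref{extension-pomega} is met at each substage; limit stages are handled by unioning, as before.

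In the final model set $\T_\lambda = \bigcup_{\gamma < \kappa} \T_\gamma^\lambda$ for $\lambda \in C \sem \{ \kappa \}$ and $\T_\kappa = \bigcup_{\gamma < \kappa} \T_\gamma$. Their index sets are $(\lambda \cdot \kappa)^{<\omega}$ and $\kappa^{<\omega}$, so their sizes are $|\lambda \cdot \kappa| = \lambda$ and $\kappa$. For maximality I would argue as in the previous proofs: since the iteration is ccc with finite support and $\cf (\kappa) > \omega$, every $C \in \omoms$ of the final model already belongs to some intermediate model $V_\gamma$ with $\gamma < \kappa$; a later application of Lemma~\ref{extension-pomega} to the relevant tree then realizes one of its two alternatives for this $C$, which by the first paragraph is precisely the maximality criterion in both $\P (\omega)$ and $\P (\omega) / \fin$.

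The main obstacle is preserving the countable chain condition, because an arbitrary finite support product of ccc posets need not be ccc, and here all $\lambda \in C$ are treated by such a product. This is handled exactly as in Theorem~\ref{tr-spectrum}: the forcing of Lemma~\ref{extension-pomega}---assembled from $\MM (\F)^\omega$ together with the auxiliary $\sigma$-centered posets that add the required pairwise disjoint sets and almost disjoint families---is $\sigma$-centered and therefore has precaliber $\aleph_1$; since precaliber $\aleph_1$ is preserved by finite support iterations and by arbitrary finite support products, the whole construction stays ccc. What remains is routine: checking that clauses (i)--(v) of the definition of wide-branching tree are preserved by each factor and survive the unions at limit stages, and that the parallel extensions of distinct $\lambda$-trees do not interfere, since they involve disjoint families of nodes and mutually generic reals.
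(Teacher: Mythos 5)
Your proposal is correct and is essentially the paper's own argument: the paper's proof of this theorem is literally the one-line instruction to combine the proofs of Theorems~\ref{tr-spectrum} and~\ref{tr-pomega}, and you have carried out exactly that combination (wide-branching trees and Lemma~\ref{extension-pomega} slotted into the bookkeeping of Theorem~\ref{tr-spectrum}), including the correct observation that the dichotomy in Lemma~\ref{extension-pomega} yields maximality in both $\P(\omega)$ and $\P(\omega)/\fin$ simultaneously. Your explicit treatment of the ordinal arithmetic ($\omega\cdot\lambda=\lambda$) and of ccc for the finite support product via $\sigma$-centeredness/precaliber $\aleph_1$ is more detail than the paper supplies, and is sound.
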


\begin{proof}
Combine the proofs of Theorems~\ref{tr-spectrum} and~\ref{tr-pomega}.
\end{proof}



\end{document}